\newtheorem{theorem}{Theorem}[section]
\newtheorem{proposition}[theorem]{Proposition}
\newtheorem{corollary}[theorem]{Corollary}
\newtheorem{lemma}[theorem]{Lemma}
\theoremstyle{definition}
\numberwithin{equation}{section}
\DeclareMathOperator{\SL}{SL}
\DeclareMathOperator{\G}{\Phi}
\DeclareMathOperator{\comIndex}{c}
\DeclareMathOperator{\Z}{\mathbb{Z}}
\newcommand{\CC}{\mathcal C}
\newcommand{\comG}{\CC}
\newcommand{\comGrowth}{{\bf c}}
\newcommand{\comGrowthSum}{{\bf C}}
\newcommand{\setU}
\def\mf#1{\mathfrak{#1}}
\def\tx#1{{\rm #1}}
\def\R{\mathbb{R}}
\def\Q{\mathbb{Q}}
\def\A{\mathbb{A}}
\def\Z{\mathbb{Z}}
\def\N{\mathbb{N}}
\def\F{\mathbb{F}}
\def\hat{\widehat}
\def\<{\langle}
\def\>{\rangle}
\begin{document}

\title{Commensurability growths of algebraic groups}

\author{Khalid Bou-Rabee}
\address{Department of Mathematics, The City College of New York}
\email{khalid.math@gmail.com}
\thanks{KB supported in part by NSF Grant \#1405609.}

\author{Tasho Kaletha}
\thanks{TK supported in part by NSF Grant \#1161489 and a Sloan Fellowship.}

\address{University of Michigan}
\email{kaletha@umich.edu}

\author{Daniel Studenmund}
\address{University of Notre Dame}
\email{dstudenm@nd.edu}
\thanks{DS supported in part by NSF Grant \#1547292.}

\subjclass[2000]{Primary 20E26, 20B07; Secondary 20K10}

\date{July 9, 2018.}


\keywords{commensurators, algebraic groups, residually finite groups}

\begin{abstract}
Fixing a subgroup $\Gamma$ in a group $G$, the full commensurability growth function assigns to each $n$ the cardinality of the set of subgroups $\Delta$ of $G$ with $[\Gamma: \Gamma \cap \Delta][\Delta : \Gamma \cap \Delta] \leq n$. 
For pairs $\Gamma \leq G$, where $G$ is a Chevalley group scheme defined over $\Z$ and $\Gamma$ is an arithmetic lattice in $G$,
we give precise estimates for the full commensurability growth, relating it to subgroup growth and a computable invariant that depends only on $G$.
\end{abstract}

\maketitle

\section*{Introduction}

Two subgroups $\Delta_1$ and $\Delta_2$ of a group $G$ are
\emph{commensurable} if their \emph{commensurability index} 
\[
  \comIndex(\Delta_1, \Delta_2) := [\Delta_1 : \Delta_1 \cap
  \Delta_2][\Delta_2 : \Delta_1 \cap \Delta_2]
\] 
is finite.
For a pair of groups $\Gamma \leq G$, the {\em commensurability growth function} $\N \to \N \cup \{ \infty \}$ assigns to each $n \in \N$ the cardinality
 $$\comGrowth_n(\Gamma,G) := | \{ \Delta \leq G : \comIndex\left(\Gamma, \Delta \right) = n \} |.$$
This function was first systematically studied in \cite{BD18}, where it was used to give regularity results on the structure of arithmetic lattices in a unipotent algebraic group.
Here we tackle the question: How fast does $\comGrowth_n$ grow? Since $\comGrowth_n$ is not necessarily increasing, we look to the \emph{full commensurability growth function} $\N \to \N \cup \{ \infty \}$ assigning to each $n \in \N$ the cardinality
$$
\comGrowthSum_n(\Gamma, G) := \sum_{k=1}^n \comGrowth_k(\Gamma,G).
$$
The above two functions generalizes Lubotzky-Segal's subgroup growth functions $a_n(\Gamma) := \comGrowth_n(\Gamma, \Gamma)$ and $s_n(\Gamma) := \comGrowthSum_n(\Gamma, \Gamma)$, which have been extensively studied \cite{MR1978431}. Indeed, subgroup growth has been estimated for  lattices in semisimple groups \cite{MR2155033, ANS03} and nilpotent groups \cite{MR943928}. In the latter case, many beautiful properties have been shown for associated zeta functions \cite{MR2807855, MR2807857, MR1217350}. Moreover, this theory has lead to the discovery of new groups \cite{LPS96, P04}.

In this article, we begin a systematic study of $\comGrowthSum_n$ for pairs $\Gamma \leq G(\R)$ where $G$ is a linear algebraic group and $\Gamma$ is an arithmetic lattice in $G(\R)$, so that $\Gamma$ is commensurable with $G(\Z)$. 
If $G$ is unipotent, \cite[Lemma 3.1]{BD18} gives that $\comGrowthSum_n(\Gamma, G(\R))$ is always finite. 
In the more general case, it is possible for $\comGrowthSum_n(\Gamma, G(\R))$ to be infinite when $G$ has a compact connected normal $\Q$-subgroup (cf. \cite{BDS18}, where other natural pairs of groups are shown to have infinite commensurability growth values).
For example, the lattice $\SL_2(\Z) \times \{1\} \leq \SL_2(\R)\times \operatorname{SO}(3)$ has infinitely many index 2 supergroups of the form $\SL_2(\Z)\times \left \langle \sigma \right\rangle$ where $\sigma\in \operatorname{SO}(3)$ is a rotation by $\pi$ about an axis in Euclidean space.
For all other semi-simple $G$, the cardinality $\comGrowthSum_n(\Gamma, G(\R))$ is always finite (proved in \S \ref{sec:preliminaries}):

\begin{proposition} \label{prop:finitecomgrowth}
Let $G$ be a semi-simple linear algebraic group with no connected normal $\Q$-subgroup $N \neq \{ e \}$ such that $N(\R)$ is compact. Then for any arithmetic lattice $\Gamma$ in $G$, and any $n \in \N$, $\comGrowthSum_n(\Gamma, G(\R))$ is finite.
\end{proposition}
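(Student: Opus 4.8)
The plan is to show that every subgroup $\Delta \le G(\R)$ with $\comIndex(\Gamma,\Delta) \le n$ is trapped inside the normalizer of one of finitely many finite-index subgroups of $\Gamma$, each of which is only a finite extension of such a subgroup. Fix $\Delta$ with $\comIndex(\Gamma,\Delta) \le n$ and set $\Lambda := \Gamma \cap \Delta$. Since $\comIndex(\Gamma,\Delta) = [\Gamma:\Lambda][\Delta:\Lambda] \le n$, both indices are at most $n$, so $\Lambda$ is a finite-index subgroup of $\Gamma$, hence itself an arithmetic lattice. First I would pass to the normal core $\Lambda_0 := \bigcap_{\delta \in \Delta} \delta \Lambda \delta^{-1}$, the kernel of the action of $\Delta$ on the coset space $\Delta/\Lambda$. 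Writing $m = [\Delta:\Lambda] \le n$, the induced map $\Delta \to \Sym(m)$ gives $[\Delta:\Lambda_0] \le m!$, whence $[\Gamma:\Lambda_0] \le n!$. Because arithmetic lattices are finitely generated, $\Gamma$ has only finitely many subgroups of index at most $n!$, so $\Lambda_0$ ranges over a finite set as $\Delta$ varies.

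The key geometric input is that each such $\Lambda_0$ has normalizer $N := N_{G(\R)}(\Lambda_0)$ that is a finite extension of $\Lambda_0$, and this is exactly where the hypothesis on $G$ is used. Since $\Lambda_0$ is a finite-index subgroup of the arithmetic lattice $\Gamma$, I would invoke the Borel density theorem in its arithmetic form: the assumption that $G$ has no nontrivial connected normal $\Q$-subgroup $N'$ with $N'(\R)$ compact is precisely the condition guaranteeing that $\Gamma$, and hence $\Lambda_0$, is Zariski dense in $G$. Consequently $Z_{G(\R)}(\Lambda_0)$ coincides with the centralizer of the Zariski closure $G$, namely the finite center $Z(G)(\R)$. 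Now the conjugation action of the identity component $N^0$ on the discrete group $\Lambda_0$ is continuous, hence locally constant, hence trivial; so $N^0 \subseteq Z_{G(\R)}(\Lambda_0)$ is finite and therefore trivial, and $N$ is discrete. As a discrete subgroup containing the lattice $\Lambda_0$, the group $N$ has finite covolume bounded by that of $\Lambda_0$ together with a positive-volume fundamental domain, so $[N:\Lambda_0] < \infty$.

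With these two facts the count closes. Every admissible $\Delta$ satisfies $\Lambda_0 \trianglelefteq \Delta$, so $\Lambda_0 \le \Delta \le N_{G(\R)}(\Lambda_0)$, giving
\[ \{ \Delta \le G(\R) : \comIndex(\Gamma,\Delta) \le n \} \subseteq \bigcup_{\Lambda_0} \{ \Delta : \Lambda_0 \le \Delta \le N_{G(\R)}(\Lambda_0) \}, \]
where $\Lambda_0$ runs over the finitely many subgroups of $\Gamma$ of index at most $n!$. For each $\Lambda_0$ the inner set is finite, since $N_{G(\R)}(\Lambda_0)/\Lambda_0$ is a finite group, and a finite union of finite sets is finite. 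As $\comGrowthSum_n(\Gamma,G(\R))$ is exactly the cardinality of the left-hand set, this proves the proposition.

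I expect the main obstacle to be the second paragraph: correctly translating the stated hypothesis on compact connected normal $\Q$-subgroups into Zariski density of the lattice, and thereby into finiteness of the centralizer. The counterexample $\SL_2(\Z) \times \{1\} \le \SL_2(\R) \times \operatorname{SO}(3)$ is instructive precisely here — the lattice fails to be Zariski dense in the compact $\Q$-factor, the centralizer of $\Lambda_0$ contains all of $\operatorname{SO}(3)$, the normalizer is no longer discrete, and this is exactly what permits infinitely many supergroups. Verifying that excluding such $\Q$-subgroups restores Zariski density (via the standard argument that the identity component of the Zariski closure is normalized by the Zariski-dense commensurator $G(\Q)$, hence normal, hence all of $G$) is the delicate point; the remaining steps — the normal-core reduction, finiteness of bounded-index subgroups in a finitely generated group, and the covolume argument — are routine.
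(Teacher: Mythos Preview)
Your argument is correct (up to an immaterial miscount: from $[\Gamma:\Lambda]\le n$ and $[\Lambda:\Lambda_0]\le [\Delta:\Lambda_0]\le n!$ you get $[\Gamma:\Lambda_0]\le n\cdot n!$, not $n!$, but any bound depending only on $n$ suffices).

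Your route differs from the paper's in how the upper trap on $\Delta$ is obtained. The paper observes that $\{\Delta\cap\Gamma : \comIndex(\Gamma,\Delta)\le n\}$ is a finite set by finite generation of $\Gamma$, intersects all of them to get a single finite-index subgroup $\Lambda_n\le\Gamma$, and then invokes Borel's theorem \cite[Theorem~4]{MR0205999} that only finitely many lattices of $G(\R)$ contain a given lattice, under exactly the stated hypothesis on compact $\Q$-factors. Every admissible $\Delta$ contains $\Lambda_n$, so there are finitely many. You instead take the normal core $\Lambda_0$ of $\Gamma\cap\Delta$ in $\Delta$, which forces $\Delta\le N_{G(\R)}(\Lambda_0)$, and then argue directly---via Borel density, finiteness of the center, and a covolume comparison---that $N_{G(\R)}(\Lambda_0)/\Lambda_0$ is finite. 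In effect you have unpacked the content of Borel's theorem rather than cited it: your normalizer-discreteness argument is essentially how that result is proved. The paper's version is shorter and avoids the normal-core manoeuvre; yours is more self-contained and makes explicit where the hypothesis on compact $\Q$-subgroups enters (through Zariski density), which is pedagogically valuable given the $\SL_2(\R)\times\operatorname{SO}(3)$ counterexample you correctly diagnose.
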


We will be interested in the asymptotic behavior of $\comGrowthSum_n$. To this end, for two increasing functions $f, g : \N \to \N \cup{\infty}$, we write $f \preceq g$ if there exists $C > 0$ such that $f(n) \leq C g(C n)$ for all $n \in \N$.
We write $f \simeq g$ if $f\preceq g$ and $g \preceq f$, and in this case say $f$ and $g$ \emph{have the same asymptotic growth}. This notion of growth is finer than that in \cite{MR2155033} (for them, $n^2 \asymp n$).
The asymptotic growth of $\comGrowthSum_n(\Gamma, G(\R))$ is called the \emph{commensurability growth} of $G$.
This notation is justified as our next basic result shows that commensurability growth is an invariant of the algebraic group. That is, up to equivalence, the choice of arithmetic subgroup does not change the full commensurability growth, and follows from the following more general result (proved in \S \ref{sec:preliminaries}).

\begin{proposition} \label{prop:cominvariance}
Let $A, B$ be subgroups of $\Delta$ with $\comIndex(A,B) < \infty$.
Then $\comGrowthSum_n(A, \Delta) \simeq \comGrowthSum_n(B, \Delta).$
\end{proposition}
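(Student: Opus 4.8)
The plan is to reduce the statement to a single submultiplicativity inequality for the commensurability index: for any three subgroups $X, Y, Z$ of $\Delta$,
\[
\comIndex(X, Z) \leq \comIndex(X, Y)\,\comIndex(Y, Z).
\]
Granting this, I set $c := \comIndex(A, B) = \comIndex(B, A)$, which is finite by hypothesis and satisfies $c \geq 1$. Applying the inequality with $(X, Y, Z) = (B, A, H)$ gives $\comIndex(B, H) \leq c\,\comIndex(A, H)$ for every subgroup $H \leq \Delta$, whence
\[
\{H \leq \Delta : \comIndex(A, H) \leq n\} \subseteq \{H \leq \Delta : \comIndex(B, H) \leq cn\}.
\]
Taking cardinalities (and recalling $\comGrowthSum_n(\Gamma,\Delta) = |\{H \leq \Delta : \comIndex(\Gamma, H) \leq n\}|$) yields $\comGrowthSum_n(A, \Delta) \leq \comGrowthSum_{cn}(B, \Delta) \leq c\,\comGrowthSum_{cn}(B, \Delta)$, which is exactly $\comGrowthSum_n(A, \Delta) \preceq \comGrowthSum_n(B, \Delta)$ with constant $C = c$. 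Since the hypothesis is symmetric in $A$ and $B$, the reverse relation follows identically, and together they give $\simeq$.

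The real content is the triangle inequality, which I expect to be the only nontrivial step. Its proof rests on the elementary fact that for subgroups $P, Q$ of a common group $Y$ one has $[P : P \cap Q] \leq [Y : Q]$, since the inclusion $P \hookrightarrow Y$ induces an injection of coset spaces $P/(P \cap Q) \hookrightarrow Y/Q$. I would apply this after factoring the index through $X \cap Y$:
\begin{align*}
[X : X \cap Z] &\leq [X : X \cap Y \cap Z] = [X : X \cap Y]\,\big[X \cap Y : (X \cap Y) \cap (Y \cap Z)\big]\\
&\leq [X : X \cap Y]\,[Y : Y \cap Z],
\end{align*}
where the final step uses $X \cap Y \cap Z = (X \cap Y) \cap (Y \cap Z)$ together with the elementary fact inside $Y$ (taking $P = X \cap Y$, $Q = Y \cap Z$). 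Symmetrically, $[Z : X \cap Z] \leq [Z : Y \cap Z]\,[Y : X \cap Y]$. Multiplying these two bounds and regrouping the four factors gives
\begin{align*}
\comIndex(X, Z) &\leq \big([X : X \cap Y][Y : X \cap Y]\big)\big([Y : Y \cap Z][Z : Y \cap Z]\big)\\
&= \comIndex(X, Y)\,\comIndex(Y, Z),
\end{align*}
as required.

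Finally, I would note that no separate treatment of infinite values is needed: if $\comGrowthSum_n(A, \Delta) = \infty$ for some $n$, the set inclusion above forces $\comGrowthSum_{cn}(B, \Delta) = \infty$ as well, so both functions are infinite beyond a common point and $\simeq$ holds trivially there. Thus the only genuine obstacle is the triangle inequality itself; once it is in hand, the passage to $\preceq$ is routine bookkeeping, amounting to correctly carrying the multiplicative constant $c$ through the definition of asymptotic equivalence.
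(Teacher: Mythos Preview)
Your proof is correct and follows essentially the same approach as the paper: both arguments reduce to the triangle inequality $\comIndex(X,Z)\leq \comIndex(X,Y)\,\comIndex(Y,Z)$ and then deduce $\comGrowthSum_n(A,\Delta)\leq \comGrowthSum_{cn}(B,\Delta)$ exactly as you do. The only difference is packaging: the paper first develops the commensurability graph $\comG(G)$ and proves (Lemma~\ref{lem:nicegeodesic}) that each edge $(H,K)$ is a geodesic of length $\comIndex(H,K)$, from which the triangle inequality drops out; you instead prove the triangle inequality directly from the index bound $[P:P\cap Q]\leq [Y:Q]$ for $P,Q\leq Y$, which is precisely the same elementary fact the paper uses inside its geodesic argument. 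Your route is shorter for this proposition alone, while the paper's framework additionally yields the metric-space structure on arithmetic subgroups (Corollary~\ref{lem:commmetric}).
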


In the course of proving Proposition \ref{prop:cominvariance} we give the space of arithmetic subgroups of $G(\R)$ the structure of a metric space under $d(\Gamma_1,\Gamma_2) := \log( c(\Gamma_1,\Gamma_2))$, which may be of independent interest; see Corollary \ref{lem:commmetric} in \S\ref{sec:preliminaries}.

Armed with this new invariant of an algebraic group, we explore examples starting with the simplest case (see \S \ref{sec:1dimcase} for the proof).

\begin{proposition}
\label{prop:Zcase}
Let $G(\Z) = \Z$ and $G(\R) = \R$.
Then
$\log(\comGrowthSum_n(\Gamma,G(\R))) \simeq \log(n).$
More precisely, 
$$n (\log(n))^{\log(2)} \preceq \comGrowthSum_n(\Gamma,G) \preceq n (\log(n)).$$
\end{proposition}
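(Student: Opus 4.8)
The plan is to convert the group-theoretic count into an arithmetic sum and then bound that sum. The conceptual heart is the classification of the relevant subgroups. If $\comIndex(\Z,\Delta)<\infty$ then $\Z\cap\Delta$ has finite index in $\Z$, so $\Z\cap\Delta=a\Z$ for some $a\ge 1$, and $\Delta$ contains $a\Z$ with finite index; in particular $\Delta$ is finitely generated and torsion-free of rank $1$, hence infinite cyclic, say $\Delta=r\Z$ with $r>0$. Since $\Delta$ meets $\Z$ nontrivially we get $r\in\Q$, and writing $r=p/q$ in lowest terms a direct computation gives $\Z\cap\Delta=p\Z$ and $[\Delta:\Z\cap\Delta]=q$, so $\comIndex(\Z,(p/q)\Z)=pq$. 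Thus $\comGrowth_n(\Z,\R)$ is the number of ordered coprime factorizations $n=pq$, namely $2^{\omega(n)}$, where $\omega(n)$ denotes the number of distinct prime divisors of $n$. Summing,
$$\comGrowthSum_n(\Z,\R)=\sum_{k=1}^n 2^{\omega(k)}.$$

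For the upper bound I would use $2^{\omega(k)}\le\tau(k)$, where $\tau$ is the full divisor-counting function, together with the elementary estimate $\sum_{k\le n}\tau(k)=\sum_{d\le n}\lfloor n/d\rfloor\le n(1+\log n)$. This gives $\comGrowthSum_n\le n(1+\log n)$, hence $\comGrowthSum_n\preceq n\log n$.

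For the lower bound I would exploit the identity $(\log n)^{\log 2}=2^{\log\log n}$ together with the fact that $\omega$ has normal order $\log\log n$. Concretely, by the asymptotic normality of $\omega$ (Erd\H{o}s--Kac), a positive proportion of $k\le n$ satisfy $\omega(k)\ge\log\log n$; restricting the sum to these $k$ yields $\comGrowthSum_n\ge c\,n\,2^{\log\log n}=c\,n(\log n)^{\log 2}$, so $n(\log n)^{\log 2}\preceq\comGrowthSum_n$. The coarse statement $\log\comGrowthSum_n\simeq\log n$ then follows at once, since $\log\bigl(n(\log n)^{\log 2}\bigr)$ and $\log\bigl(n\log n\bigr)$ are both $\simeq\log n$.

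The main obstacle is the lower bound, and precisely the coefficient $1$ in the exponent: the normal-order heuristic produces the threshold $\log\log n$, but a crude second-moment (Tur\'an) bound only furnishes a positive proportion of $k$ with $\omega(k)\ge\log\log n-O(\sqrt{\log\log n})$, and the resulting factor $2^{-O(\sqrt{\log\log n})}\to 0$ cannot be absorbed by the constants allowed in $\preceq$. I would therefore invoke the sharp distribution of $\omega$ near its mean to keep the threshold at $\log\log n$ on the nose. I would also remark that one can in fact strengthen the lower bound all the way to $n\log n$: writing $2^{\omega(k)}=\sum_{d\mid k}\mu^2(d)$ gives $\comGrowthSum_n=\sum_{d\le n\ \text{squarefree}}\lfloor n/d\rfloor$, and bounding $\sum_{d\le n\ \text{squarefree}}d^{-1}\ge\zeta(2)^{-1}\log n$ via the factorization $d=ab^2$ with $a$ squarefree shows $\comGrowthSum_n\simeq n\log n$, of which the stated bounds are a weakening.
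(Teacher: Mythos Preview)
Your argument is correct. The derivation of $\comGrowth_k(\Z,\R)=2^{\omega(k)}$ and the upper bound via $2^{\omega(k)}\le\tau(k)$ and the Dirichlet divisor sum match the paper exactly.

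For the lower bound the paper takes a more elementary route than your Erd\H{o}s--Kac argument: it applies Jensen's inequality to the convex function $x\mapsto 2^x$, obtaining
\[
\frac{1}{n}\sum_{k\le n}2^{\omega(k)} \ \ge\ 2^{\,\frac{1}{n}\sum_{k\le n}\omega(k)},
\]
and then invokes only the average order $\frac{1}{n}\sum_{k\le n}\omega(k)=\log\log n+B+o(1)$ to conclude $\comGrowthSum_n\ge n(\log n)^{\log 2}\cdot 2^{B+o(1)}$. This needs no distributional information about $\omega$ beyond its mean, so it sidesteps both the Tur\'an second-moment loss you flagged and the appeal to Erd\H{o}s--Kac. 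Your approach works but imports substantially heavier number theory for the same conclusion. On the other hand, your closing remark that $2^{\omega(k)}=\sum_{d\mid k}\mu^2(d)$ leads to the sharp asymptotic $\comGrowthSum_n\sim\zeta(2)^{-1}\,n\log n$ is a genuine strengthening that neither the paper's Jensen argument nor your Erd\H{o}s--Kac argument achieves, and it shows the stated lower bound $n(\log n)^{\log 2}$ is not tight.
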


Proposition \ref{prop:Zcase} demonstrates that $\comGrowthSum_n(G(\Z), G(\R))$ can have different growth from that of $s_n(G(\Z))$. Our main result controls this gap for many simple algebraic groups. For us, a \emph{Chevalley group scheme} is a split simple algebraic group that is simply-connected.

\begin{theorem} \label{thm:main}
Let $G$ be a Chevalley group scheme defined over $\Z$.
Then there exists $M > 0$ such that for any non-uniform arithmetic lattice $\Gamma$ in $G$, 
$$
s_n(\Gamma) \leq \comGrowthSum_n(\Gamma, G(\R)) \preceq n^M s_n(\Gamma).
$$
\end{theorem}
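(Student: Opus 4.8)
The plan is to prove the two inequalities separately; the left one is immediate, and the right one carries all the weight. For the lower bound, note that every subgroup $\Delta \leq \Gamma$ satisfies $\Gamma \cap \Delta = \Delta$, so $\comIndex(\Gamma,\Delta) = [\Gamma:\Delta]$. Hence each of the $s_n(\Gamma)$ subgroups of $\Gamma$ of index at most $n$ is a distinct subgroup of $G(\R)$ with $\comIndex(\Gamma,\Delta) \leq n$, giving $s_n(\Gamma) \leq \comGrowthSum_n(\Gamma,G(\R))$.

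For the upper bound I would first restrict the ambient group. If $\comIndex(\Gamma,\Delta) < \infty$ then $\Delta$ is commensurable with $\Gamma$, so by Margulis's commensurator theorem together with Borel's computation of the commensurator of an arithmetic lattice, every such $\Delta$ lies in $\Comm_{G(\R)}(\Gamma) = G(\Q)$ (modulo the finite center of $G$, which I would handle separately). I then partition the count by $\Delta_0 := \Gamma \cap \Delta$: writing $a = [\Gamma:\Delta_0]$ and $b = [\Delta:\Delta_0]$, we have $ab = \comIndex(\Gamma,\Delta) \leq n$, so that, setting $N(\Delta_0,m) := |\{ \Delta \leq G(\Q) : \Delta_0 \leq \Delta,\ [\Delta:\Delta_0] \leq m \}|$,
\[
\comGrowthSum_n(\Gamma,G(\R)) \leq \sum_{a=1}^n\ \sum_{\substack{\Delta_0 \leq \Gamma \\ [\Gamma:\Delta_0]=a}} N(\Delta_0, n/a).
\]
The whole theorem then reduces to the key estimate $N(\Delta_0,m) \leq C m^M$, with $C$ and $M$ depending only on $G$ and uniform over all $\Delta_0$ commensurable with $\Gamma$: granting it, and using $(n/a)^M \leq n^M$ together with $\sum_{a \leq n} a_a(\Gamma) = s_n(\Gamma)$, the double sum is at most $C n^M s_n(\Gamma)$, which is the desired bound. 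Finiteness of every term is guaranteed by Proposition~\ref{prop:finitecomgrowth} and finite generation of arithmetic groups.

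Everything thus hinges on the polynomial overgroup bound, which I expect to be the main obstacle, and my approach to it is geometric. For $g \in \Delta$ with $[\Delta:\Delta_0]=d$, the conjugate $g\Delta_0 g^{-1}$ also has index $d$ in $\Delta$, so $\comIndex(\Delta_0, g\Delta_0 g^{-1}) \leq d^2$; in the commensurability metric $d(\cdot,\cdot) = \log \comIndex(\cdot,\cdot)$ of Corollary~\ref{lem:commmetric}, every element of $\Delta$ moves $\Delta_0$ a distance at most $2\log d$, so $\Delta$ is generated by $\Delta_0$ and elements of a ball of radius $2\log d$. To count such $\Delta$ I would pass to the $p$-adic closures: the lattices commensurable with $\Delta_0$ are the vertices of the product of the Bruhat--Tits buildings of the groups $G(\Q_p)$, on which $\comIndex$ is comparable to the building distance, and a ball of radius $r$ in such a building contains only $O(q^{Cr})$ vertices. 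Since here $r \asymp \log d$, this yields a count of the form $d^{O(1)}$, exactly the shape required. The two delicate points, where I expect the real difficulty, are (i) controlling the intermediate \emph{non-congruence} overgroups, so that the geometric ball count bounds the number of abstract subgroups $\Delta$ rather than merely their congruence closures, and (ii) extracting a single exponent $M$ valid uniformly over the whole commensurability class and depending only on $G$; both require carefully relating the abstract index $[\Delta:\Delta_0]$ to the local indices and summing the building-growth estimates over the finitely many primes that can contribute.
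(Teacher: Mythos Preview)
Your lower bound and the overall reduction scheme are fine, but the key estimate is false as stated. You claim $N(\Delta_0,m)\le Cm^M$ with $C,M$ depending only on $G$ and \emph{uniform} in $\Delta_0$. Take $G=\SL_3$, $\Gamma=\SL_3(\Z)$, and $\Delta_0=\Gamma(N)$ with $N=p_1\cdots p_k$ a product of distinct odd primes. Then $\Gamma/\Delta_0\cong\prod_i\SL_3(\F_{p_i})$, and an overgroup $\Delta_0\le\Delta\le\Gamma$ with $[\Delta:\Delta_0]=2$ corresponds to an order-$2$ subgroup of this product, i.e.\ to an involution. Each factor $\SL_3(\F_{p_i})$ has $\asymp p_i^4$ involutions, so the product has $\gtrsim N^4$ of them, and hence $N(\Delta_0,2)\gtrsim N^4\to\infty$ as $k\to\infty$. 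No uniform bound $N(\Delta_0,2)\le C\cdot 2^M$ can hold. (These $\Delta$ lie in $\Gamma$, so they do not satisfy $\Gamma\cap\Delta=\Delta_0$; but your definition of $N(\Delta_0,m)$ drops that constraint, and your building sketch does nothing to reinstate it.) Your displacement argument ``$c(\Delta_0,g\Delta_0 g^{-1})\le d^2$'' cannot help here: all these $g$ lie in $\Gamma$ and hence normalize $\Delta_0$, so the displacement is $0$. More generally, arbitrary lattices commensurable with $\Gamma$ are \emph{not} vertices of the product of buildings; only maximal parahorics are.

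The paper's proof uses the opposite decomposition: instead of shrinking to $\Delta_0=\Gamma\cap\Delta$ and counting overgroups, it \emph{enlarges} $\Delta$ to a maximal lattice $\Delta_1\supseteq\Delta$. Two facts then do the work. First, since there are only finitely many $G(\R)$-conjugacy classes of maximal lattices, a covolume comparison gives $[\Delta_1:\Delta]\le C_0 n$; together with the fact that these finitely many classes give finitely many isomorphism types, this produces the factor $s_{Dn}(\Gamma)$ as the count of $\Delta$ inside a fixed $\Delta_1$. Second, $\Delta\cap\Gamma$ contains a principal congruence subgroup of level $m\le cn$ (this uses \cite[Prop.~6.1.2]{MR1978431}), so one only needs to count \emph{maximal} lattices containing $\Gamma(m)$; this is where Bruhat--Tits geometry enters (Lemma~\ref{lem:maximalcount}), and the count is $\le m^{M_0}$, polynomial in $n$. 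The passage to maximal lattices is exactly what lets one replace ``all overgroups of a fixed $\Delta_0$'' (which blows up) by ``vertices of the building near a fixed vertex'' (which is polynomial).
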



 Essential to our proof is the quantification of a theorem by Borel \cite[Theorem 4]{MR0205999} using ideas and methods from Borel-Prasad \cite{MR1019963} (see Lemma \ref{lem:maximalcount} in \S \ref{sec:semisimplecase}). As an application of Theorem \ref{thm:main} and \cite[Theorem 1]{MR2155033} we obtain:
\begin{corollary} \label{cor:main}
Let $G$ be a Chevalley group scheme defined over $\Z$.
If $G$ has $\R$-rank greater than 1, then for any non-uniform arithmetic lattice $\Gamma$ in $G$, 
$$
\lim_{n \to \infty} \frac{\log(\comGrowthSum_n(\Gamma, G))}{(\log(n))^2/\log\log(n)}
=
\lim_{n \to \infty} \frac{\log(s_n(\Gamma))}{(\log(n))^2/\log\log(n)},
$$
exists and is equal to $\gamma(G)$, a constant depending only on $G$, which is easily computed from the root system of $G$.
\end{corollary}

We were moved to pursue the topic of commensurability growth (first in \cite{BD18}, then in \cite{BDS18} and this paper) after reading the work of N. Avni, S. Lim, and E. Nevo on the related, but different, notion of \emph{commensurator growth} \cite{MR2897732}.

\subsection*{Acknowledgements}
We are grateful to Benson Farb and Gopal Prasad for helpful conversations. We thank Rachel Skipper for helpful comments on an earlier draft.

\section{Basic properties} \label{sec:preliminaries}

For an algebraic group $G$, the space of arithmetic subgroups of $G(\R)$, subgroups commensurable with $G(\Z)$, forms a metric space with discrete topology under the distance function $d(\Gamma_1, \Gamma_2) = \log( c(\Gamma_1, \Gamma_2))$. We prove this by introducing an auxiliary object, the commensurability graph.

Let $G$ be a group and $\mathcal{F}$ a family of subgroups of $G$.
The {\em commensurability graph} associated to such a family is the directed graph with vertex set $\mathcal{F}$ where an edge is drawn from subgroup $H$ to subgroup $K$ if and only if $H\cap K$ is of finite index in $H$ and $K$. 
Each edge between $A, B$ is assigned the weight $\comIndex(A,B)$.
Denote the weighted graph by $\comG_{\mathcal{F}} (G)$, or simply $\comG(G)$ when $\mathcal{F}$ consists of all subgroups of $G$.
Define the {\em length} of a path in $\comG(G)$ to be the product of its edge weights. 
We will consider $\comG(G)$ to be a metric space with the path metric induced by this length.




Let $\gamma$ be a path in some $\CC(G)$ with vertices $\gamma(i)$.
Then we say  $(\gamma(i),
  \gamma(i+1))$ is {\em ascending} if $\gamma(i) \leq \gamma(i+1)$ and
  {\em descending} if $\gamma(i+1)\leq \gamma(i)$.
Any edge $(A,B)$ has the same length as the path $(A, A\cap B), (A\cap B, B)$. It follows that any two vertices in the same component of $\comG(G)$ are connected by a geodesic whose edges are each either ascending or descending.
  
 Our first result of this section shows that any two path-connected points can be connected by a geodesic that is first descending and then ascending.

\begin{lemma} \label{lem:nicegeodesic}
  Let $G$ a group. 
  For any two commensurable subgroups $H, K \leq G$, there is a
  geodesic in $\CC(G)$ supported by the vertices $H, H\cap K,$ and $K$.
  Hence, the distance between $H$ and $K$ in $\CC(G)$ is $c(H,K)$ and the edge $(H,K)$ is a geodesic.
\end{lemma}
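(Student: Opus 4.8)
The plan is to reduce everything to a single submultiplicative triangle inequality for the commensurability index, from which the lemma follows at once. Concretely, I claim that for any three pairwise commensurable subgroups $A, B, C \leq G$,
\[
\comIndex(A,C) \leq \comIndex(A,B)\,\comIndex(B,C).
\]
Granting this, consider any path $H = V_0, V_1, \dots, V_m = K$ in $\CC(G)$. Iterating the inequality gives $\comIndex(H,K) \leq \prod_{i=0}^{m-1} \comIndex(V_i, V_{i+1})$, and the right-hand side is exactly the length of the path. Hence every path from $H$ to $K$ has length at least $\comIndex(H,K)$. On the other hand, the computation preceding the lemma shows that the path $H, H\cap K, K$ has length precisely $\comIndex(H,K)$, as does the single edge $(H,K)$. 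Therefore both are geodesics, the distance between $H$ and $K$ equals $\comIndex(H,K)$, and the geodesic is supported by $H, H\cap K, K$, as desired.

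It remains to establish the triangle inequality, and the key tool I would use is the elementary bound $[X : X\cap Y] \leq [B : Y]$ valid for any subgroups $X, Y$ of a group $B$, which holds because the map $x(X\cap Y) \mapsto xY$ injects the cosets of $X\cap Y$ in $X$ into those of $Y$ in $B$. I would bound the two indices appearing in $\comIndex(A,C)$ separately. For the first, insert the triple intersection and apply the tower law:
\[
[A : A\cap C] \leq [A : A\cap B\cap C] = [A : A\cap B]\,[A\cap B : A\cap B\cap C] \leq [A : A\cap B]\,[B : B\cap C],
\]
where the last step applies the elementary bound inside $B$ with $X = A\cap B$ and $Y = B\cap C$. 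Symmetrically one obtains $[C : A\cap C] \leq [C : B\cap C]\,[B : A\cap B]$. Multiplying these two bounds and regrouping the four factors as $\comIndex(A,B)\,\comIndex(B,C)$ yields the claim.

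The main obstacle — really the only point requiring care — is the index bookkeeping: ensuring that all intermediate indices are finite (which is guaranteed since commensurability is transitive, so $A, B, C$ lie in a single component of $\CC(G)$ and all pairwise indices are finite) and that the four factors produced by the two chained inequalities regroup \emph{exactly} into $\comIndex(A,B)\,\comIndex(B,C)$ rather than a larger product. Once the elementary coset-injection bound is in hand, the rest is a short mechanical verification; notably, no deeper input about $G$ is needed, so the argument is purely combinatorial and holds for an arbitrary group $G$.
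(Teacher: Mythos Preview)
Your proof is correct, but it proceeds in the reverse logical order from the paper. The paper starts from a putative geodesic (already reduced to ascending/descending edges), performs a local ``peak-to-valley'' swap---replacing a pair $(L,M),(M,N)$ with $L,N\leq M$ by $(L,L\cap N),(L\cap N,N)$---to force the geodesic into descending-then-ascending form $(H,\Delta),(\Delta,K)$ with $\Delta\leq H\cap K$, and then uses minimality to pin $\Delta=H\cap K$. You instead prove the submultiplicative triangle inequality $\comIndex(A,C)\leq \comIndex(A,B)\comIndex(B,C)$ directly and deduce that every path has length at least $\comIndex(H,K)$. Both arguments rest on the same elementary coset-injection bound $[X:X\cap Y]\leq [B:Y]$ for $X,Y\leq B$; the paper applies it once inside the ambient vertex $M$, while you apply it twice inside the middle group $B$. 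Your route is slightly more economical in that it yields Corollary~\ref{lem:commmetric} (the metric-space structure) as an immediate byproduct rather than as a consequence of the lemma, and it avoids having to first reduce to ascending/descending edges; the paper's route, on the other hand, gives a bit more geometric information about how an arbitrary geodesic can be straightened.
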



\begin{proof}
  Let $\gamma$ be a geodesic from $H$ to $K$ whose edges are each ascending or descending. 
  Suppose there is a consecutive pair of edges $(L,M)$ and $(M,N)$
  in a geodesic such that $L\leq M$ and $N\leq M$. Note that if $h,k$
  are distinct coset representatives for $L\cap N$ in $L$, then
  $hk^{-1} \notin L\cap N$ and therefore $hk^{-1} \notin N$. Therefore
  any collection of distinct coset representatives for $L\cap N$ in
  $L$ is also a collection of distinct coset representative for $N$ in
  $M$, and so $[L : L\cap N] \leq [M:N]$. The same argument shows that
  $[N : N\cap L] \leq [M:L]$. We may therefore replace the consecutive
  pair of edges $(L,M)$ and $(M,N)$ with $(L, L\cap N)$ and
  $(L\cap N, N)$ in any geodesic.

  It follows that there is a geodesic from $H$ to $K$ which is a
  (possibly empty) sequence of descending edges followed by a
  (possibly empty) sequence of ascending edges. 
  Note that, by definition, if $H_1 \geq H_2 \geq H_3$ then the path $(H_1, H_2), (H_2, H_3)$ has the same length as $(H_1, H_3)$. Hence, one can replace the geodesic $\gamma$ with $(H, \Delta), (\Delta, K)$ where $\Delta \leq H \cap K$ without changing its length.
  Since a geodesic has minimal length over all paths, we must have $\Delta = H \cap K$, and so we are done.
\end{proof}

\begin{corollary} \label{lem:commmetric}
    Let $\mathcal{F}$ be a collection of commensurable subgroups of an ambient group $G$. Then $\mathcal{F}$ is a metric space under the function $d(H,K) := \log( c(H,K) )$.
\end{corollary}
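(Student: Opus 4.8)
The plan is to verify the four axioms of a metric directly for the function $d(H,K) = \log(c(H,K))$. Because the members of $\mathcal{F}$ are pairwise commensurable, $c(H,K)$ is finite for every $H,K \in \mathcal{F}$, so $d$ is genuinely real-valued to begin with. Three of the axioms — non-negativity, the identity of indiscernibles, and symmetry — I expect to fall out immediately from the definition $c(H,K) = [H:H\cap K][K:H\cap K]$, while the triangle inequality is the real content and will be extracted from Lemma \ref{lem:nicegeodesic}.

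For the three easy axioms: since $c(H,K)$ is a product of two positive integers it is at least $1$, whence $d(H,K) = \log c(H,K) \geq 0$. Moreover $d(H,K) = 0$ forces both indices to equal $1$, i.e. $H = H\cap K = K$, and conversely $d(H,H) = \log 1 = 0$, giving the identity of indiscernibles. Symmetry $d(H,K) = d(K,H)$ is clear because $c$ is visibly symmetric in its two arguments. This leaves only the triangle inequality $d(H,L) \leq d(H,K) + d(K,L)$, which after exponentiating is the submultiplicative statement $c(H,L) \leq c(H,K)\, c(K,L)$.

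To prove this submultiplicative inequality I would work inside the commensurability graph $\CC(G)$ and invoke Lemma \ref{lem:nicegeodesic}, which identifies the path metric on $\CC(G)$ (whose length is the product of edge weights) with $c$: the distance between any two commensurable subgroups equals their commensurability index, and each single edge is itself a geodesic. Granting this, concatenating the geodesic edge $(H,K)$ with the geodesic edge $(K,L)$ produces a path from $H$ to $L$ of length $c(H,K)\,c(K,L)$; since the distance from $H$ to $L$ is the minimal length over all paths joining them, we obtain $c(H,L) \leq c(H,K)\,c(K,L)$. Taking logarithms then converts this product bound into the additive triangle inequality. The main obstacle is thus entirely packaged into Lemma \ref{lem:nicegeodesic}: once $c$ is identified with the multiplicative path metric, the corollary is a formal consequence of passing to logarithms to linearize that metric.
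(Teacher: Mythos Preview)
Your proposal is correct and follows essentially the same approach as the paper: both dispose of symmetry and the identity of indiscernibles directly, then obtain the triangle inequality by invoking Lemma~\ref{lem:nicegeodesic} to compare the geodesic edge between two vertices with the two-edge path through a third vertex, yielding $c(H,L) \leq c(H,K)\,c(K,L)$ and hence the additive inequality after taking logarithms. Your write-up is slightly more detailed in verifying non-negativity and finiteness, but the argument is the same.
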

\begin{proof}
    The distance is clearly symmetric, and it is not hard to see that $d(H,K)=0$ if and only if $H=K$. Given three subgroups $H,K,L\in \mathcal{F}$, two paths in $\CC(G)$ from $H$ to $K$ are the edge $(H,K)$ and the pair of edges $(H,L), (L,K)$. Lemma \ref{lem:nicegeodesic} implies $c(H,K) \leq c(H,L)c(L,K)$, from which the triangle inequality follows.
\end{proof}

In some cases, being a local geodesic is the same as being a global geodesic.

\begin{corollary} \label{cor:firstloctoglob}
Let $H_1, H_2, \ldots, H_n$ be successive vertices in a path $\gamma$ in some $\CC(G)$.
If $H_i \subset H_{i+1}$ for all $i$, then $\gamma$ is a geodesic.
\end{corollary}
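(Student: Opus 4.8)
The plan is to show directly that the length of $\gamma$ equals the distance $d(H_1, H_n)$ between its endpoints, which by Lemma~\ref{lem:nicegeodesic} is precisely $c(H_1, H_n)$. Since $\gamma$ is an ascending chain, its length collapses to a single subgroup index, and the same index computes $c(H_1, H_n)$; matching these two quantities is the whole argument.

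First I would compute the length of $\gamma$. Because each edge is ascending, with $H_i \subseteq H_{i+1}$, we have $H_i \cap H_{i+1} = H_i$, so the weight of the edge $(H_i, H_{i+1})$ is
\[
c(H_i, H_{i+1}) = [H_i : H_i][H_{i+1} : H_i] = [H_{i+1} : H_i].
\]
The length of $\gamma$ is the product of these weights, namely $\prod_{i=1}^{n-1} [H_{i+1} : H_i]$. Applying the multiplicativity of the index (the tower law) along the chain $H_1 \subseteq H_2 \subseteq \cdots \subseteq H_n$, this product telescopes to $[H_n : H_1]$.

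Next I would identify this with the distance between the endpoints. Since $H_1 \subseteq H_n$ we have $c(H_1, H_n) = [H_1 : H_1][H_n : H_1] = [H_n : H_1]$, and by Lemma~\ref{lem:nicegeodesic} the distance from $H_1$ to $H_n$ in $\CC(G)$ equals $c(H_1, H_n) = [H_n : H_1]$. Thus the length of $\gamma$ equals the distance between its endpoints, so no path from $H_1$ to $H_n$ can be shorter, and $\gamma$ is a geodesic.

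There is essentially no serious obstacle here: the argument rests entirely on the observation that along an ascending chain the commensurability indices multiply exactly as ordinary subgroup indices do, combined with the identification of distance with commensurability index already supplied by Lemma~\ref{lem:nicegeodesic}. The only point deserving a word of care is that every index $[H_{i+1} : H_i]$ is finite, which holds because the $H_i$ lie in a single component of $\CC(G)$ and are therefore pairwise commensurable.
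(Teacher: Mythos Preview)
Your proof is correct and follows essentially the same approach as the paper: both compute the length of the ascending path as $\prod_i [H_{i+1}:H_i] = [H_n:H_1]$ and compare it to the geodesic length $c(H_1,H_n)=[H_n:H_1]$ furnished by Lemma~\ref{lem:nicegeodesic}. The paper phrases the equality via the prime factorization of $[H_n:H_1]$, whereas you invoke the tower law directly, but this is only a cosmetic difference.
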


\begin{proof}
Let $\gamma$ be the path given by $H_1, \ldots, H_n$.
Then let $\gamma'$ be the geodesic $H_1, H_n$ guaranteed to exist by Lemma \ref{lem:nicegeodesic}.
Let $p_1 p_2 \cdots p_k$ be the prime factorization of $[H_n : H_1]$.
Then it is straightforward to see that
$$
\text{length of } \gamma = \prod_{i} p_i = \text{ length of }\gamma',
$$
and so $\gamma$ is a geodesic, as desired.
\end{proof}

We apply the above to prove Proposition \ref{prop:cominvariance}:

\begin{lemma} \label{lem:cominvariance}
Let $A, B$ be subgroups of $\Delta$ with $\comIndex(A,B) < \infty$.
Then $\comGrowthSum_n(A, \Delta) \simeq \comGrowthSum_n(B, \Delta).$
\end{lemma}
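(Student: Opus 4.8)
The plan is to show each direction of the equivalence $\comGrowthSum_n(A,\Delta) \simeq \comGrowthSum_n(B,\Delta)$ by exploiting the metric structure from Corollary \ref{lem:commmetric}. The key observation is that $A$ and $B$ are at bounded distance in the commensurability graph $\CC(\Delta)$: setting $K := \comIndex(A,B) < \infty$, the triangle inequality gives $\comIndex(B,\Delta') \leq \comIndex(B,A)\comIndex(A,\Delta') = K\cdot\comIndex(A,\Delta')$ for any subgroup $\Delta' \leq \Delta$ commensurable with $A$ (equivalently with $B$). Thus commensurating with $B$ costs at most a factor of $K$ more than commensurating with $A$. This suggests that the constant $C$ witnessing $\preceq$ should be taken to be $K$ (or a small modification thereof).

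First I would set up the counting explicitly. Recall $\comGrowthSum_n(A,\Delta) = |\{\Delta' \leq \Delta : \comIndex(A,\Delta') \leq n\}|$. Given a subgroup $\Delta'$ with $\comIndex(A,\Delta') \leq n$, the bound above yields $\comIndex(B,\Delta') \leq Kn$, so $\Delta'$ is counted in $\comGrowthSum_{Kn}(B,\Delta)$. This gives an injection from the set counted by $\comGrowthSum_n(A,\Delta)$ into the set counted by $\comGrowthSum_{Kn}(B,\Delta)$, namely the identity on subgroups. Hence $\comGrowthSum_n(A,\Delta) \leq \comGrowthSum_{Kn}(B,\Delta) \leq K\,\comGrowthSum_{Kn}(B,\Delta)$, which is exactly $\comGrowthSum_n(A,\Delta) \preceq \comGrowthSum_n(B,\Delta)$ with constant $C = K$. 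The reverse inequality $\comGrowthSum_n(B,\Delta) \preceq \comGrowthSum_n(A,\Delta)$ follows by the symmetric argument, using $\comIndex(A,B) = \comIndex(B,A) = K$. Together these give $\simeq$.

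The main subtlety to verify carefully is that the triangle inequality from Corollary \ref{lem:commmetric} applies: it requires $A$, $B$, and $\Delta'$ all to be pairwise commensurable so that the relevant commensurability indices are finite and $d$ is genuinely a metric on the family containing them. Since $\comIndex(A,\Delta') \leq n < \infty$ means $A$ and $\Delta'$ are commensurable, and $A$ and $B$ are commensurable by hypothesis, transitivity of commensurability places $B$ and $\Delta'$ in the same commensurability class, so Corollary \ref{lem:commmetric} applies to the family $\mathcal{F}$ of all subgroups of $\Delta$ commensurable with $A$. Translating the metric statement $d(B,\Delta') \leq d(B,A) + d(A,\Delta')$ back through $d = \log\circ\,\comIndex$ recovers the multiplicative bound $\comIndex(B,\Delta') \leq \comIndex(B,A)\,\comIndex(A,\Delta')$ used above. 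I expect this bookkeeping of the commensurability class to be the only real point requiring care; the counting argument itself is a direct application of the triangle inequality, so no hard estimate is involved and the whole proof should be short.
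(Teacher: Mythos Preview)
Your proposal is correct and follows essentially the same approach as the paper: both arguments use the triangle inequality $\comIndex(B,\Delta') \leq \comIndex(B,A)\,\comIndex(A,\Delta')$ to conclude $\comGrowthSum_n(A,\Delta) \leq \comGrowthSum_{Kn}(B,\Delta)$ with $K=\comIndex(A,B)$, then apply symmetry. The only cosmetic difference is that the paper cites Lemma~\ref{lem:nicegeodesic} directly for the triangle inequality, whereas you cite its immediate consequence Corollary~\ref{lem:commmetric}.
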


\begin{proof}
Let $A' \leq \Delta$ be a subgroup with $c(A,A') = n$.
By Lemma \ref{lem:nicegeodesic}, the edge $(A',B)$ is a geodesic in $\comG(G)$, hence we obtain
$$
c(A', B) \leq c(B, A) c(A, A') = c(A, B) n.
$$
This gives
$$
\comGrowthSum_n(A, \Delta) \leq \comGrowthSum_{\comIndex(A,B) n} (B, \Delta).
$$
Similarly,
$$
\comGrowthSum_n(B, \Delta) \leq \comGrowthSum_{\comIndex(A,B) n} (A, \Delta).
$$
\end{proof}

We conclude this section with a proof of Proposition \ref{prop:finitecomgrowth} from the introduction.

\begin{proposition}
Let $G$ be a semi-simple linear algebraic group with no connected normal $\Q$-subgroup $N \neq \{ e \}$ such that $N(\R)$ is compact. Then for any arithmetic lattice $\Gamma$ in $G$, and any $n \in \N$, the cardinality $\comGrowthSum_n(\Gamma, G(\R))$ is finite.
\end{proposition}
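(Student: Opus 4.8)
The plan is to count the subgroups $\Delta \le G(\R)$ with $\comIndex(\Gamma,\Delta)\le n$ by organizing them according to their intersection with $\Gamma$ and then bounding, for each fixed intersection, the number of possible overgroups. First observe that every such $\Delta$ is commensurable with $\Gamma$, and hence, containing the lattice $\Gamma\cap\Delta$ with finite index, is itself a lattice in $G(\R)$, in particular discrete. Since $\Gamma$ is finitely generated, it has only finitely many subgroups of index at most $n$; as $\Lambda:=\Gamma\cap\Delta$ satisfies $[\Gamma:\Lambda]\le n$, there are finitely many possibilities for $\Lambda$. It therefore suffices to fix one such $\Lambda$ and bound the number of $\Delta$ with $\Lambda = \Gamma \cap \Delta$ and $[\Delta:\Lambda]\le n$.

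Next I would pass to the normal core. With $\Lambda$ fixed, set $\Lambda_\Delta:=\bigcap_{\delta\in\Delta}\delta\Lambda\delta^{-1}$, the normal core of $\Lambda$ in $\Delta$. Then $\Lambda_\Delta\trianglelefteq\Delta$ and $\Lambda_\Delta\le\Lambda$, and $[\Delta:\Lambda_\Delta]$ divides $[\Delta:\Lambda]!\le n!$, so in particular $[\Lambda:\Lambda_\Delta]\le n!$. Because $\Lambda$ is finitely generated it has only finitely many subgroups of index at most $n!$, so as $\Delta$ varies $\Lambda_\Delta$ ranges over a finite set $L_1,\dots,L_r$ of subgroups of $\Lambda$. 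Since $\Delta$ normalizes $\Lambda_\Delta$, we have $\Delta\le N_{G(\R)}(\Lambda_\Delta)$, and it remains to show that for each fixed $L_j$ only finitely many $\Delta$ with $L_j\le\Delta\le N_{G(\R)}(L_j)$ occur.

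The key step is to prove that $N:=N_{G(\R)}(L_j)$ is discrete, and this is exactly where the hypothesis on $G$ enters. By the Borel density theorem, the absence of a nontrivial connected normal $\Q$-subgroup $N'$ with $N'(\R)$ compact forces every lattice commensurable with $\Gamma$ — in particular the finite-index subgroup $L_j$ of $\Gamma$ — to be Zariski dense in $G$. The identity component $N^\circ$ is a connected subgroup normalizing the discrete group $L_j$, so for each $\lambda\in L_j$ the continuous map $N^\circ\to L_j$, $g\mapsto g\lambda g^{-1}$, has connected domain and discrete image and is thus constant; hence $N^\circ$ centralizes $L_j$, centralizes its Zariski closure $G$, and therefore lies in the finite center $Z(G)$. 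A connected subgroup of a finite group is trivial, so $N^\circ=\{e\}$ and $N$ is discrete. A discrete subgroup of $G(\R)$ containing the lattice $L_j$ is itself a lattice and contains $L_j$ with finite index; since $L_j\trianglelefteq N$, the subgroups $\Delta$ with $L_j\le\Delta\le N$ correspond to subgroups of the finite group $N/L_j$, of which there are finitely many. Summing over the finitely many choices of $\Lambda$ and of $L_j$ then yields the finiteness of $\comGrowthSum_n(\Gamma,G(\R))$.

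The main obstacle is precisely this key step: establishing discreteness of the normalizer rests entirely on invoking the correct form of the Borel density theorem under the stated compactness hypothesis. The example $\SL_2(\Z)\times\{1\}\le\SL_2(\R)\times\operatorname{SO}(3)$ shows that this hypothesis is essential — there the lattice fails to be Zariski dense, the normalizer contains the compact factor $\operatorname{SO}(3)$, and infinitely many overgroups appear — so care is needed to formulate Borel density in the generality that matches the hypothesis on $\Q$-subgroups rather than on $\R$-factors.
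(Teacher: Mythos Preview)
Your argument is correct, but the paper takes a shorter path. After observing (as you do) that $\Gamma$ is finitely generated and hence the set $\mathcal{S}_n := \{\Gamma \cap \Delta : c(\Gamma,\Delta) = n\}$ is finite, the paper sets $\Lambda_n := \bigcap_{A \in \mathcal{S}_n} A$, a single finite-index subgroup of $\Gamma$, and then simply invokes Borel's theorem \cite[Theorem~4]{MR0205999} to conclude that only finitely many lattices in $G(\R)$ contain $\Lambda_n$; every $\Delta$ counted by $\comGrowth_n$ is such a lattice, so the count is finite. Your route instead unpacks, and essentially reproves, the relevant instance of Borel's theorem: passing to the normal core lets you trap $\Delta$ inside $N_{G(\R)}(\Lambda_\Delta)$, and Borel density forces that normalizer to be discrete, hence a lattice containing $\Lambda_\Delta$ with finite index. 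The mechanism hidden behind the paper's citation is exactly this---Borel's Theorem~4 is itself proved via density and discreteness of normalizers---so your version is more self-contained and makes transparent precisely where the ``no compact $\Q$-factor'' hypothesis enters, at the cost of the extra bookkeeping with normal cores and the $n!$ bound.
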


\begin{proof}
Let $\Gamma$ be an arithmetic lattice in $G$.
For a fixed natural number $n$, we must show that the set
$$\mathcal{L}_n := \{ \Delta \leq G(\R) : \comIndex(\Delta, \Gamma) = n \}$$
is finite.
Since $\Gamma$ is finitely generated \cite[Theorem 6.12]{MR0147566}, the set
$$
\mathcal{S}_n := \{ \Delta \cap \Gamma: \Delta \in \mathcal{L}_n \},
$$
is finite. It follows that $\Lambda_n := \cap_{A \in \mathcal{S}_n} A$ is a subgroup of finite index in $\Gamma$. By \cite[Theorem 4]{MR0205999}, there are finitely many lattices in $G$ that contain $\Lambda_n$.
It follows that $\mathcal{L}_n$ is finite, and so $\comGrowthSum_n(\Gamma, G(\R))$ is finite, as desired.
\end{proof}

\section{The one-dimensional case} \label{sec:1dimcase}

Notice that if $f(n) \preceq g(n)$, then $\log(f(n)) \preceq \log(g(n))$. Thus, Propositon \ref{prop:Zcase} follows immediately from the following:

\begin{proposition}
\label{prop:ZcaseBody}
Let $G(\Z) = \Z$ and $G(\R) = \R$.
Then
$$n (\log(n))^{\log(2)} \preceq \comGrowthSum_n(\Gamma,G) \preceq n (\log(n)).$$
\end{proposition}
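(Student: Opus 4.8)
The plan is to first classify the subgroups of $\R$ commensurable with $\Z$, thereby turning the problem into a classical sum over the integers. If $\Delta \leq \R$ is commensurable with $\Z$, then $\Z \cap \Delta$ has finite index in both, so $\Z \cap \Delta = m\Z$ for some $m \geq 1$ and $\Delta$ is a finitely generated, torsion-free, rank-one subgroup of $\R$; hence $\Delta = \alpha\Z$ for some $\alpha \in \R_{>0}$. For $\Z \cap \alpha\Z$ to be nontrivial we need $\alpha \in \Q$, so write $\alpha = p/q$ in lowest terms with $p,q \geq 1$ coprime. A direct computation gives $\Z \cap \tfrac{p}{q}\Z = p\Z$, whence $[\Z : \Z \cap \Delta] = p$ and $[\Delta : \Z \cap \Delta] = q$, so that $\comIndex(\Z, \tfrac{p}{q}\Z) = pq$. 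Thus $\comGrowth_k(\Z,\R)$ equals the number of coprime ordered factorizations $k = pq$, which is $2^{\omega(k)}$, where $\omega(k)$ denotes the number of distinct prime divisors of $k$. Summing, $\comGrowthSum_n(\Z, \R) = \sum_{k=1}^n 2^{\omega(k)}$, and the proposition becomes a statement about this arithmetic sum.

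For the upper bound I would use $2^{\omega(k)} \leq \tau(k)$, where $\tau(k)$ is the number of divisors of $k$, together with the elementary estimate $\sum_{k \leq n}\tau(k) = \sum_{d \leq n}\lfloor n/d \rfloor \leq n(1 + \log n)$. This gives $\comGrowthSum_n(\Z,\R) \leq n(1+\log n) \preceq n\log n$ directly, which is the claimed upper estimate.

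For the lower bound I would appeal to convexity. Since $t \mapsto 2^t$ is convex, Jensen's inequality yields $\tfrac1n \sum_{k\le n} 2^{\omega(k)} \ge 2^{\frac1n\sum_{k\le n}\omega(k)}$. Counting each prime once per multiple, $\sum_{k\le n}\omega(k) = \sum_{p\le n}\lfloor n/p\rfloor$, so dividing by $n$ and discarding fractional parts gives $\tfrac1n\sum_{k\le n}\omega(k) = \sum_{p\le n}\tfrac1p + O(1) = \log\log n + O(1)$ by Mertens' theorem. Therefore $\comGrowthSum_n(\Z,\R) \ge n\cdot 2^{\log\log n - O(1)} = c\, n(\log n)^{\log 2}$, which is exactly $n(\log n)^{\log 2} \preceq \comGrowthSum_n(\Z,\R)$. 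I expect the main obstacle to lie in the classification of the first step, in particular in pinning down that commensurability forces $\Delta$ to be infinite cyclic on a rational generator and in correctly evaluating the intersection $\Z \cap \tfrac{p}{q}\Z = p\Z$; once the identity $\comGrowthSum_n(\Z,\R) = \sum_{k\le n}2^{\omega(k)}$ is in hand, both bounds are routine analytic number theory. The gap between the exponents $\log 2$ and $1$ reflects that the Jensen step is lossy (the true order is in fact $n\log n$), but the weaker lower bound stated here suffices for the conclusion.
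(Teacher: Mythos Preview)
Your proof is correct and follows essentially the same route as the paper: reduce to $\comGrowthSum_n(\Z,\R)=\sum_{k\le n}2^{\omega(k)}$, bound above via $2^{\omega(k)}\le \tau(k)$ and the Dirichlet divisor sum, and bound below via Jensen combined with the Hardy--Ramanujan/Mertens estimate $\sum_{k\le n}\omega(k)=n\log\log n+O(n)$. The only cosmetic difference is that you derive the identity $\comGrowth_k(\Z,\R)=2^{\omega(k)}$ directly from the classification of commensurable subgroups of $\R$, whereas the paper imports it from \cite{BD18}.
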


\begin{proof}
We recall from \cite[Proof of Proposition 2.1]{BD18} the formula
$$
\comGrowth_n(\Gamma, G) = 2^{\omega_n},
$$
where $\omega_k$ is the number of distinct primes dividing $k$.
For the lower bound, 
first by \cite[22.10]{MR2445243}
$$
\sum_{k \leq n} \omega_k = n \log\log(n) + B n + o(n),
$$
for a fixed constant $B$.
Then by Jensen's Inequality applied to $\log$,
$$
\log\left(\frac{\sum_{k=1}^n 2^{\omega_k}}{n}\right)
\geq \frac{\log(2) \sum_{k=1}^n \omega_k}n
\geq \log(2) ( \log \log (n) + B + o(n)/n).
$$
Taking the exponential of both sides gives the lower bound:
$$
\sum_{k=1}^n 2^{\omega_k} \succeq n (\log(n))^{\log(2)}.
$$

For the upper bound, let $d(k)$ be the number of natural numbers dividing $k$, then by \cite[Theorem 3.3]{MR0434929},
$$
\sum_{k \leq n} d(k) = n \log(n) + (2\gamma -1) n + O(\sqrt{n}),
$$
where $\gamma$ is Euler's gamma constant.
Since $2^{\omega_k} \leq d(k),$ we get
$$
\comGrowthSum_n(G) \preceq n \log(n),
$$
as desired.



\end{proof}

\section*{The semi-simple case} \label{sec:semisimplecase}

It will be useful for us to control the number of maximal lattices containing a fixed lattice. The following is a quantification of a proof by Borel \cite{MR0205999} and uses ideas and methods from Borel and Prasad \cite{MR1019963}.

\begin{lemma} \label{lem:maximalcount}
Let $G$ be a Chevalley group scheme defined over $\Z$.
Let $H = \ker \left(G(\Z) \to G(\Z/m\Z) \right)$
Then there exits $M_0 > 0$, depending only on $G$, such that the number of maximal lattices containing $H$ is bounded above by $m^{M_0}$. More precisely, $M_0$ can be taken as $3+2d$, where $d$ is the dimension of $G$.
\end{lemma}

\begin{proof}
Given a compact open subgroup $K \subset G(\A_f)$ we obtain a discrete subgroup $\Gamma \subset G(\R)$ by taking the intersection $\Gamma=K \cap G(\Q)$. Then $\Gamma$ is an arithmetic subgroup of $G(\R)$ and the closure of its image under the natural embedding $G(\Q) \to G(\A_f)$ is again $K$ \cite[\S1.3]{MR1019963}. Conversely, if $\Gamma \subset G(\Q)$ is a maximal arithmetic subgroup of $G(\R)$, then its closure $K \subset G(\A_f)$ is a maximal compact open subgroup, hence given as $\prod_p K_p$, where $K_p \subset G(\Q_p)$ is a maximal parahoric subgroup, and is hyperspecial for almost all $p$ \cite[\S1.4]{MR1019963}.

A maximal $\Gamma$ contains $H$ if and only if its closure $K$ contains the closure of $H$, which equals $\tx{ker}(G(\hat \Z) \to G(\hat \Z/m\hat \Z))$. This is in turn equivalent to $K_p$ containing $H_p$, where $H_p=\tx{ker}(G(\Z_p) \to G(\Z_p/p^{k_p}\Z_p))$ and $m=\prod_p p^{k_p}$.

We now focus on a fixed $p$ and count the number of maximal parahoric subgroups $K_p \subset G(\Q_p)$ that contain $H_p$. Let $d=\tx{dim}(G)$. The $G(\Q_p)$-conjugacy classes of maximal parahoric subgroups correspond to the vertices of a $d$-dimensional simplex -- a fixed alcove in a fixed apartment of the Bruhat-Tits building for $G(\Q_p)$. There are thus $d+1$ such conjugacy classes and it is enough to estimate the number of elements in a given conjugacy class that contain $H_p$.

Let $o$ be the hyperspecial vertex of the Bruhat-Tits building corresponding to the $\Z_p$-structure of $G$ coming from its $\Z$-structure. Fix a Borel pair $(T,B)$ in $G$ over $\Q_p$ so that $o$ belongs to the apartment of $T$, and let $x$ be a vertex in the unique alcove of the apartment of $T$ that contains $o$ in its closure. We have the Bruhat decomposition 
$$G(\Q_p) = \bigcup_{\lambda \in X_*(T)} G(\Z_p) \lambda(p) G(\Q_p)_x.$$ 
Here $X_*(T)$ is the lattice of cocharacters of $T$, $G(\Q_p)_x$ is the parahoric subgroup corresponding to $x$, and $G(\Z_p)$ is by definition the parahoric subgroup corresponding to $o$. We thus have the surjective map from $G(\Z_p) \times X_*(T)$ to the conjugacy class of $G(\Q_p)_x$ that sends $(h,\lambda)$ to $h\lambda(p)G(\Q_p)_x\lambda(p)^{-1}h^{-1}$. Since $G(\Z_p)$ normalizes $H_p$, the group $h\lambda(p)G(\Q_p)_x\lambda(p)^{-1}h^{-1}$ contains $H_p$ if and only if $\lambda(p)^{-1}H_p\lambda(p) \subset G(\Q_p)_x$.

Let $R$ be the root system of $G$ with respect to $T$ and let $(u_\alpha : \G_a \to G)_{\alpha \in R}$ be a parameterization of the root subgroups defined over $\Z_p$. For $z \in \Q_p$ we see that $u_\alpha(z) \in \lambda(p)^{-1}H_p\lambda(p)$ if and only if $\tx{ord}_p(z) \geq k-\<\lambda,\alpha\>$ and $u_\alpha(z) \in G(\Q_p)_x$ if and only if $\tx{ord}_p(z) \geq -\<x,\alpha\>$. Comparing these conditions we see that $\lambda(p)^{-1}H_p\lambda(p) \subset G(\Q_p)_x$ holds if and only if $\<\lambda,\alpha\> \leq k+\<\alpha,x\>$ holds for all $\alpha \in R$. Since $x$ and $o$ both belong to the closure of the same alcove we have $\<\alpha,x\> \leq 1$ for all $\alpha \in R$. Every $\lambda \in X_*(T)$ that satisfies $\<\lambda,\alpha\> \leq k+1$ can be written as an integral linear combination $\sum_\alpha a_\alpha \check\varpi_\alpha$ of the fundamental coweights $\check\varpi_\alpha$ corresponding to the set of simple roots $\alpha$, and we must have $|a_\alpha| \leq k+1$. While not sufficient, this condition is certainly necessary, and it allows us to estimate the number of such $\lambda$ by $(2k+3)^d$.

Each $\lambda$ such that $\lambda(p)^{-1}H_p\lambda(p) \subset G(\Q_p)_x$ corresponds to the double coset $G(\Z_p)\lambda(p)G(\Q_p)_x$ and each $G(\Q_p)_x$-cosets in this double coset gives an element of the $G(\Q_p)$-conjugacy class of $G(\Q_p)_x$ that contains $H_p$. For a given such $\lambda$ we now need to estimate the number of $G(\Q_p)_x$-cosets in $G(\Z_p)\lambda(p)G(\Q_p)_x$. The map
\[ G(\Z_p)/(G(\Z_p) \cap \lambda(p)G(\Q_p)_x\lambda(p)^{-1}) \to G(\Z_p)\lambda(p)G(\Q_p)_x/G(\Q_p)_x,\quad h \mapsto h\lambda(p) \]
is a bijection. Since $H_p \subset G(\Z_p) \cap \lambda(p)G(\Q_p)_x\lambda(p)^{-1}$ we can estimate the number of elements of this set by the number of elements of $G(\Z_p)/H_p$. The latter is the cardinality of $G(\Z_p/p^k\Z_p)$. This group has a filtration of length $k$, whose successive quotients are all isomorphic to $\mf{g}(\F_p)$, except for the last one, which is $G(\F_p)$. Here $\mf{g}$ is the Lie algebra of $G$ and  $\#\mf{g}(\F_p)=p^d$. On the other hand, according to \cite[Theorem 25]{MR0466335} we have  $\#G(\F_p)=p^N\prod_i(p^{d_i}-1)$, where $N$ is the number of positive roots, $d_1,\dots,d_l$ are the degrees of the fundamental invariants of the Weyl group, and $l$ is the rank of $G$. We have the relation $N=\sum_i (d_i-1)$. We can therefore estimate 
\[
\#G(\F_p) \leq p^N\prod_i p^{d_i} = p^{N+\sum_i d_i}=p^{2N+l}=p^d.
\]
The cardinality of $G(\Z_p/p^k\Z_p)$ is the product of successive quotients of the aforementioned filtration of length $k$, hence $\#G(\Z_p/p^k\Z_p) \leq p^{kd}$.

In summary, we have estimated the number of maximal compact open subgroups of $G(\Q_p)$ in a fixed $G(\Q_p)$ conjugacy class that contain $H_p$ by $p^{kd}(2k+3)^d$. If $p \geq 5$ then $2k+3 \leq p^k$ for all $k$. We can remove the restriction on $p$ by using the cruder estimate $2k+3 \leq p^{3k}$. Recalling that there are $d+1$ conjugacy classes of maximal compact open subgroups, we estimate the number of all maximal compact open subgroups of $G(\Q_p)$ that contain $H_p$ by $(d+1)p^{(3+d)k}$. Since $d+1 \leq p^d$ we estimate this by $p^{(3+2d)k}$, noting that when $k=0$ there is only one maximal compact subgroup of $G(\Q_p)$ containing $H_p=G(\Z_p)$, namely $G(\Z_p)$ itself. 

Since $m=\prod_p p^{k_p}$, we estimate the number of maximal arithmetic subgroups of $G(\R)$ that contain $H$ by $\prod_p p^{(3+2d)k_p}=m^{(3+2d)}$, as desired.
\end{proof}

\subsection{Proof of Theorem \ref{thm:main}}

\begin{theorem} \label{thm:mainBody}
Let $G$ be a Chevalley group scheme defined over $\Z$.
Then there exists $M > 0$ such that for any non-uniform arithmetic lattice $\Gamma$ in $G$, 
$$
s_n(\Gamma) \leq \comGrowthSum_n(\Gamma, G) \preceq n^M s_n(\Gamma).
$$
\end{theorem}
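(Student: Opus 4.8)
My plan is to treat the two inequalities separately. The left-hand inequality is immediate: if $\Delta\le\Gamma$ has index $k\le n$ then $\comIndex(\Gamma,\Delta)=[\Gamma:\Delta]=k\le n$, so every subgroup counted by $s_n(\Gamma)$ is counted by $\comGrowthSum_n(\Gamma,G)$, giving $s_n(\Gamma)\le\comGrowthSum_n(\Gamma,G)$ with no loss. For the right-hand inequality, Proposition \ref{prop:cominvariance} together with the standard fact that commensurable finitely generated groups have $\simeq$-equivalent subgroup growth lets me assume $\Gamma=G(\Z)$. I would then stratify the set $\{\Delta:\comIndex(\Gamma,\Delta)\le n\}$ by the subgroup $\Lambda:=\Gamma\cap\Delta$. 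If $a=[\Gamma:\Lambda]$ and $b=[\Delta:\Lambda]$ then $ab\le n$, so $\Lambda$ runs over subgroups of $\Gamma$ of index at most $n$---at most $s_n(\Gamma)$ of them---and $\Lambda$ is recovered from $\Delta$. This yields the master inequality $\comGrowthSum_n(\Gamma,G)\le s_n(\Gamma)\cdot F(n)$, where $F(n):=\sup_\Lambda\#\{\Delta:\Gamma\cap\Delta=\Lambda,\ [\Delta:\Lambda]\le n\}$, and reduces the theorem to showing that the map $\Delta\mapsto\Gamma\cap\Delta$ is polynomially-to-one, i.e. $F(n)\preceq n^M$.

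To estimate a single fiber I would move to the adelic picture. Because $G$ is simply connected, strong approximation makes $G(\Q)$ dense in $G(\A_f)$, so each lattice $\Delta$ commensurable with $\Gamma$ lies in $G(\Q)=\Comm(\Gamma)$ and has a congruence closure $\Delta^c:=\overline{\Delta}\cap G(\Q)\supseteq\Delta$, where $\overline{\Delta}$ is the closure in $G(\A_f)$. The key elementary estimate is a bound on the non-congruence defect: approximating elements of $\Delta^c$ by elements of $\Delta$ inside the open set $G(\widehat{\Z})$ shows $\Delta^c=\Delta\cdot(\Gamma\cap\Delta^c)$, whence $[\Delta^c:\Delta]=[\Gamma\cap\Delta^c:\Lambda]\le[\Gamma:\Lambda]=a$. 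A short index computation then gives $\comIndex(\Gamma,\Delta^c)\le ab\le n$, so the closure $\overline{\Delta}=\overline{\Delta^c}$ is a compact open subgroup of $G(\A_f)$ whose commensurability index with $G(\widehat{\Z})$ is at most $n$, and $\Delta$ sits at index at most $a$ below the congruence lattice $\Delta^c$.

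I would then bound a fiber by factoring $\Delta\mapsto(\overline{\Delta},\Delta)$. First, the number of possible closures is controlled: the compact open subgroups $K\supseteq\overline{\Lambda}$ with $[K:\overline{\Lambda}]\le n$ are governed by the Bruhat--Tits building, and their count is polynomial in $n$ by Lemma \ref{lem:maximalcount}---this requires knowing that the level of the congruence closure $\Lambda^c$ (the least $m$ with the principal congruence subgroup of level $m$ contained in $\Lambda^c$) is bounded by a polynomial in $a=[\Gamma:\Lambda]$, which should follow from the filtration of $G(\Z_p)$ by principal congruence subgroups and the fact that for simple $G$, away from finitely many small primes, the graded pieces generate one another under commutators. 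Second, for each fixed closure $K$ the subgroups $\Delta$ with $\overline{\Delta}=K$ and $\Gamma\cap\Delta=\Lambda$ lie between $\Lambda$ and $\Delta^c=K\cap G(\Q)$, at index at most $a$ below $\Delta^c$ and meeting $\Gamma$ exactly in $\Lambda$; bounding the number of such dense subgroups is the non-congruence part of the count. Assembling a polynomial bound on the number of closures with a polynomial bound on this multiplicity yields $F(n)\preceq n^M$ and hence $\comGrowthSum_n(\Gamma,G)\preceq n^M s_n(\Gamma)$.

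The main obstacle is precisely the uniform control of a fiber, and it has two pieces. The more serious is the non-congruence multiplicity: I must bound, uniformly over all (possibly non-congruence) $\Lambda\le\Gamma$, the number of lattices $\Delta$ with a prescribed congruence closure and prescribed intersection $\Gamma\cap\Delta=\Lambda$. The defect bound $[\Delta^c:\Delta]\le a$ confines these $\Delta$ to bounded ``height'' below $\Delta^c$ and forces them to be supplements to $\Gamma\cap\Delta^c$ meeting it in $\Lambda$, which is what should make their number polynomial; making this rigorous and uniform in $\Lambda$ is the crux. The second, more routine, piece is the level-versus-index comparison feeding Lemma \ref{lem:maximalcount}, for which the prime-by-prime congruence filtration of $G(\Z_p)$ should suffice, with the finitely many small primes handled by the cruder estimates already present in that lemma.
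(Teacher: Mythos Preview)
Your lower bound and the reduction via Proposition~\ref{prop:cominvariance} are fine, and your level-versus-index input is exactly what the paper uses (it is \cite[Proposition~6.1.2]{MR1978431}). The gap is in bounding the fiber $F(n)$. First, Lemma~\ref{lem:maximalcount} counts only \emph{maximal} compact open subgroups containing a given principal congruence subgroup; it does not bound the number of arbitrary compact open $K$ with $\overline{\Lambda}\le K$ and $[K:\overline{\Lambda}]\le n$, and passing through a maximal $K_{\max}\supseteq K$ only trades this for a subgroup count inside $K_{\max}$, reintroducing a factor of $s_n$. Second, you correctly identify the non-congruence multiplicity---the number of supplements $\Delta$ to $\Gamma\cap\Delta^c$ over $\Lambda$ inside a fixed $\Delta^c$---as the crux, but you offer no mechanism to bound it; the defect estimate $[\Delta^c:\Delta]\le a$ locates these $\Delta$ but does not count them. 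As written, the argument does not close.

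The paper sidesteps both problems by stratifying not by $\Lambda=\Gamma\cap\Delta$ but by a choice of \emph{maximal} lattice $\Delta_1\supseteq\Delta$. A covolume comparison, using that $G(\R)$ has only finitely many conjugacy classes of maximal arithmetic lattices, gives $[\Delta_1:\Delta]\le C_0 n$. The level-versus-index bound shows $\Gamma\cap\Delta$, and hence $\Delta_1$, contains a principal congruence subgroup of level $m\le cn$, so Lemma~\ref{lem:maximalcount} now applies on the nose and yields polynomially many choices for $\Delta_1$. For each such $\Delta_1$ the possible $\Delta$ are subgroups of index at most $C_0 n$, and since there are only finitely many isomorphism classes of maximal lattices their count is uniformly $\preceq s_n(\Gamma)$. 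This produces a single factor of $s_n$ with a polynomial prefactor, and no congruence-closure or supplement count is needed.
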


\begin{proof}
The lower bound is clear.
We proceed to prove the upper bound, by Lemma \ref{lem:cominvariance} we may suppose $\Gamma$ is a maximal lattice containing $G(\Z)$.
Let $\Delta$ be a lattice with $\comIndex(\Gamma, \Delta) \leq n$.
Let $\Delta_1$ be a maximal lattice containing $\Delta$ \cite{MR0205999}.
Let $\mu$ be a Haar measure on $G(\R)$.
There are finitely many conjugacy classes of maximal lattices in $G(\R)$ by \cite{MR2726109}.
Let $A_1, \ldots, A_r$ be representatives from each conjugacy class of maximal lattices. 
Set $C_0 = (\max_{i=1,\ldots, r} \mu(A_i))/\mu(\Gamma)$.
Then it follows that
$$
[\Delta_1 : \Delta \cap \Gamma] 
= \mu(\Delta_1)/\mu(\Delta \cap \Gamma) 
\leq \max_{i=1,\ldots, r} \mu(A_i)/\mu(\Delta \cap \Gamma) = C_0 [\Gamma: \Delta \cap \Gamma].
$$

Since $[\Gamma: \Delta \cap \Gamma] \leq n$, it follows that
$$
[\Delta_1 : \Delta] \leq C_0 n.
$$
Next, let $m$ be the minimal natural number such that $\Delta \cap \Gamma$ contains the kernel of the map $\phi: \Gamma \to G(\Z/m\Z)$.
Then by \cite[Proposition 6.1.2]{MR1978431}, we have that there exists $c > 0$, depending only on $G$, such that $m \leq c [\Gamma: \Gamma \cap \Delta]$.
We conclude that $\Delta$ is a subgroup of index $\leq C_0 n$ for some maximal lattice containing some $m$th principal congruence subgroup, where $m \leq c n$.

By Lemma \ref{lem:maximalcount}, there exists at most $(1 + 2^{M_0} + 3^{M_0} + \cdots (cn)^{M_0})$ possibilities for such maximal lattices. Since there are finitely many isomorphism classes of such maximal lattices (since there are finitely many conjugacy classes), there exists at most 
$$
(1 + 2^{M_0} + 3^{M_0} + \cdots (cn)^{M_0}) s_\Gamma(D n)
$$
possibilities for subgroups $\Delta$ with $\comIndex(\Gamma, \Delta) \leq n$, for some fixed $D > 0$.
Since $(1 + 2^{M_0} + 3^{M_0} + \cdots (cn)^{M_0})$ is a polynomial in $n$, we are done.
\end{proof}

\subsection{Proof of Corollary \ref{cor:main}}
We first remark that \cite[Theorem 1]{MR2155033} holds for $G$, since their proof fails only for $G = {}^6D_4$ \cite[pp. 135]{MR2155033}, which never occurs when $G$ is split.

We proceed with this result in hand. By Theorem \ref{thm:main} we have 
$\comGrowthSum_n(\Gamma, G) \preceq n^M s_n(\Gamma).$ Hence, for some $C > 0$, 
$$
\lim_{n \to \infty} \frac{\log(\comGrowthSum_n(\Gamma, G))}{(\log(n))^2/\log\log(n)}
\leq
\lim_{n \to \infty} \frac{\log(s_{Cn}(\Gamma)) + M \log(n) + M \log(C)}{(\log(n))^2/\log\log(n)}
$$
Since $\frac{\log(n)}{(\log(n))^2/\log\log(n))} \to 0$ as $n \to \infty$, we have 
\begin{equation} \label{eqn:proofcor1}
\lim_{n \to \infty} \frac{\log(\comGrowthSum_n(\Gamma, G))}{(\log(n))^2/\log\log(n)}
\leq
\lim_{n \to \infty} \frac{\log(s_{Cn}(\Gamma))}{(\log(n))^2/\log\log(n)}.
\end{equation}
Since $\lim\limits_{n \to \infty} \frac{(\log(n))^2}{\log\log(n)} \frac{\log\log(Cn)}{(\log(C n))^2} = 1$ and $\lim\limits_{n \to \infty} \frac{\log(s_{n}(\Gamma))}{(\log(n))^2/\log\log(n)}
$ exists by \cite[Theorem 1]{MR2155033}, we have
$$
\lim_{n \to \infty} \frac{\log(s_{Cn}(\Gamma))}{(\log(n))^2/\log\log(n)}
=
\lim_{n \to \infty} \frac{\log(s_{n}(\Gamma))}{(\log(n))^2/\log\log(n)}.
$$
It follows from Inequality (\ref{eqn:proofcor1}) and $s_n(\Gamma) \leq \comGrowthSum_n(\Gamma, G)$ that
$$
\lim_{n \to \infty} \frac{\log(\comGrowthSum_n(\Gamma, G))}{(\log(n))^2/\log\log(n)}
=
\lim_{n \to \infty} \frac{\log(s_n(\Gamma))}{(\log(n))^2/\log\log(n)},
$$
and so Corollary \ref{cor:main} follows from \cite[Theorem 1]{MR2155033}, as desired.

\section{Final remarks}

For $\Gamma$ an arithmetic lattice in a Chevalley group scheme $G$ defined over $\Z$, it would be interesting to improve Theorem \ref{thm:main} to find a number $N$ such that $\comGrowthSum_n(\Gamma, G) \simeq n^N s_n(\Gamma)$.
However, even this would not be enough to determine the asymptotic growth class of $\comGrowthSum_n(\Gamma,G)$; the best known bounds on the long-term behavior of $s_n$ only determine the asymptotic growth of $\log(s_n(\Gamma))$ \cite{MR2155033}. This article shows that $\log(s_n(\Gamma)) \simeq \log(\comGrowthSum_n(\Gamma,G))$.


\bibliography{refs}

\def\cprime{$'$} \def\cprime{$'$} \def\cprime{$'$} \def\cprime{$'$}
  \def\cprime{$'$} \def\cprime{$'$} \def\cprime{$'$}
\providecommand{\bysame}{\leavevmode\hbox to3em{\hrulefill}\thinspace}
\providecommand{\MR}{\relax\ifhmode\unskip\space\fi MR }
\providecommand{\MRhref}[2]{%
  \href{http://www.ams.org/mathscinet-getitem?mr=#1}{#2}
}
\providecommand{\href}[2]{#2}
\begin{thebibliography}{BGLS10}

\bibitem[ALN12]{MR2897732}
Nir Avni, Seonhee Lim, and Eran Nevo, \emph{On commensurator growth}, Israel J.
  Math. \textbf{188} (2012), 259--279. \MR{2897732}

\bibitem[ANS03]{ANS03}
Mikl{\'o}s Ab{\'e}rt, Nikolay Nikolov, and Bal{\'a}zs Szegedy, \emph{Congruence
  subgroup growth of arithmetic groups in positive characteristic}, Duke Math.
  J. \textbf{117} (2003), no.~2, 367--383. \MR{1971298 (2004a:20057)}

\bibitem[Apo76]{MR0434929}
Tom~M. Apostol, \emph{Introduction to analytic number theory}, Springer-Verlag,
  New York-Heidelberg, 1976, Undergraduate Texts in Mathematics. \MR{0434929}

\bibitem[BGLS10]{MR2726109}
Mikhail Belolipetsky, Tsachik Gelander, Alexander Lubotzky, and Aner Shalev,
  \emph{Counting arithmetic lattices and surfaces}, Ann. of Math. (2)
  \textbf{172} (2010), no.~3, 2197--2221. \MR{2726109}

\bibitem[BHC62]{MR0147566}
Armand Borel and Harish-Chandra, \emph{Arithmetic subgroups of algebraic
  groups}, Ann. of Math. (2) \textbf{75} (1962), 485--535. \MR{0147566}

\bibitem[Bor66]{MR0205999}
Armand Borel, \emph{Density and maximality of arithmetic subgroups}, J. Reine
  Angew. Math. \textbf{224} (1966), 78--89. \MR{0205999}

\bibitem[BP89]{MR1019963}
Armand Borel and Gopal Prasad, \emph{Finiteness theorems for discrete subgroups
  of bounded covolume in semi-simple groups}, Inst. Hautes \'Etudes Sci. Publ.
  Math. (1989), no.~69, 119--171. \MR{1019963}

\bibitem[BRS]{BD18}
Khalid Bou-Rabee and Daniel Studenmund, \emph{Arithmetic lattices in unipotent
  algebraic groups}, arXiv:1804.04973.

\bibitem[BRSS18]{BDS18}
Khalid Bou-Rabee, Rachel Skipper, and Daniel Studenmund, \emph{Commensurability
  growth of branch groups}, arXiv:1808.08660.

\bibitem[dS93]{MR1217350}
Marcus P.~F. du~Sautoy, \emph{Finitely generated groups, {$p$}-adic analytic
  groups and {P}oincar\'e series}, Ann. of Math. (2) \textbf{137} (1993),
  no.~3, 639--670. \MR{1217350}

\bibitem[GSS88]{MR943928}
F.~J. Grunewald, D.~Segal, and G.~C. Smith, \emph{Subgroups of finite index in
  nilpotent groups}, Invent. Math. \textbf{93} (1988), no.~1, 185--223.
  \MR{943928 (89m:11084)}

\bibitem[HW08]{MR2445243}
G.~H. Hardy and E.~M. Wright, \emph{An introduction to the theory of numbers},
  sixth ed., Oxford University Press, Oxford, 2008, Revised by D. R.
  Heath-Brown and J. H. Silverman, With a foreword by Andrew Wiles.
  \MR{2445243}

\bibitem[Klo11]{MR2807855}
Benjamin Klopsch, \emph{An introduction to compact {$p$}-adic {L}ie groups},
  Lectures on profinite topics in group theory, London Math. Soc. Stud. Texts,
  vol.~77, Cambridge Univ. Press, Cambridge, 2011, pp.~7--61. \MR{2807855}

\bibitem[LN04]{MR2155033}
Alexander Lubotzky and Nikolay Nikolov, \emph{Subgroup growth of lattices in
  semisimple {L}ie groups}, Acta Math. \textbf{193} (2004), no.~1, 105--139.
  \MR{2155033}

\bibitem[LPS96]{LPS96}
Alexander Lubotzky, L{\'a}szl{\'o} Pyber, and Aner Shalev, \emph{Discrete
  groups of slow subgroup growth}, Israel J. Math. \textbf{96} (1996), no.~part
  B, 399--418. \MR{1433697 (97m:20036)}

\bibitem[LS03]{MR1978431}
Alexander Lubotzky and Dan Segal, \emph{Subgroup growth}, Progress in
  Mathematics, vol. 212, Birkh\"auser Verlag, Basel, 2003. \MR{1978431}

\bibitem[Pyb04]{P04}
L{\'a}szl{\'o} Pyber, \emph{Groups of intermediate subgroup growth and a
  problem of {G}rothendieck}, Duke Math. J. \textbf{121} (2004), no.~1,
  169--188. \MR{2031168 (2004k:20056)}

\bibitem[Ste68]{MR0466335}
Robert Steinberg, \emph{Lectures on {C}hevalley groups}, Yale University, New
  Haven, Conn., 1968, Notes prepared by John Faulkner and Robert Wilson.
  \MR{0466335}

\bibitem[Vol11]{MR2807857}
Christopher Voll, \emph{A newcomer's guide to zeta functions of groups and
  rings}, Lectures on profinite topics in group theory, London Math. Soc. Stud.
  Texts, vol.~77, Cambridge Univ. Press, Cambridge, 2011, pp.~99--144.
  \MR{2807857}

\end{thebibliography}
\bibliographystyle{amsalpha}

\end{document}